\newtheorem{maintheorem}{Theorem}
\newtheorem{theorem}{Theorem}[section]
\newtheorem{lemma}[theorem]{Lemma}
\newtheorem{proposition}[theorem]{Proposition}
\newenvironment{proof}{\noindent {\bf Proof:} }
\newcommand{\qed}{\hfill \ensuremath{\Box}}
\newcommand{\D}{\Delta}
\newcommand{\w}{\omega}
\newcommand{\p}{\mathbb{P}}
\newcommand{\E}{\mathbb{E}}
\newcommand{\ob}{\bar{\omega}}
\begin{document}

\title{Reconstruction threshold for the hardcore model}

\author{Nayantara Bhatnagar, Allan Sly and Prasad Tetali\footnote{Research supported in part by NSF grants DMS-0701043 and-CCR 0910584}}

\maketitle

\begin{abstract}
In this paper we consider the reconstruction problem on the tree for
the hardcore model.  We determine new bounds for the
non-reconstruction regime on the $k$-regular tree showing
non-reconstruction when
\[
\lambda < \frac{(\ln 2-o(1))  \ln^2 k}{2 \ln \ln k}
\]
improving the previous best bound of $\lambda < e-1$.  This is almost tight as reconstruction is known to hold when $\lambda> (e+o(1))\ln^2 k$.  We discuss the relationship for finding large independent sets in sparse random graphs and to the mixing time of Markov chains for sampling independent sets on trees.
\end{abstract}

\section{Introduction}

The reconstruction problem on the tree was originally studied as a
problem in statistical physics but has since found many applications
including in computational phylogenetic reconstruction~\cite{DMR}, the
study of the geometry of the space of random constraint satisfaction
problems~\cite{AchCA,KMRSZ} and the mixing time of Markov
chains~\cite{BCMR,MSW}.  For a Markov model on an infinite tree the
reconstruction problem asks when do the states at level $n$ provide
non-trivial information about the state at the root as $n$ goes to
infinity.    In general the problem involves determining the existence
of solutions of distribution valued equations and as such exact
thresholds are known only in a small number of
examples~\cite{BRZ,EKPS,BCMR,Sly:09}.

In this paper we analyze the reconstruction problem for the hardcore
model on the $k$-regular tree, where each vertex of the tree has
degree $k$.  The hardcore model is a probability distribution over
independent sets $I$ weighted proportionally to $\lambda^{|I|}$.
Previously Brightwell and Winkler~\cite{BW} showed that reconstruction
is possible when $\lambda> (e+o(1))\ln^2 k$.  Improving on their bound
for the non-reconstruction regime, Martin~\cite{Mar} showed that
non-reconstruction holds when $\lambda < e-1$ still leaving a wide gap
between the two thresholds.  Our main result establishes that the bound of
Brightwell and Winkler  is tight up to a $\ln \ln k$
multiplicative factor.

\begin{maintheorem}\label{t:main}
The hardcore model on the $k$-regular tree has non-reconstruction when
\[
\lambda < \frac{(\ln 2-o(1))  \ln^2 k}{2 \ln \ln k}.
\]
\end{maintheorem}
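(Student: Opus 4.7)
The plan is to analyze the standard tree recursion for the hardcore posterior at the root given a Gibbs-distributed boundary, and show that in the claimed regime it concentrates on its fixed-point value. Let $T^n$ denote the $k$-regular tree of depth $n$ rooted at $\rho$, and let $\tau$ be a level-$n$ boundary sampled from the hardcore Gibbs measure $\mu$. Writing $q_v(\tau) = \mu(X_v=1 \mid \tau)/\mu(X_v=0 \mid \tau)$ for the posterior odds at $v$, the tree structure together with the hard constraint yields the recursion
\[
q_v \;=\; \lambda \prod_{u \in \mathrm{ch}(v)} (1+q_u)^{-1},
\]
and non-reconstruction is equivalent to showing that $q_\rho(\tau)$, as a random variable in $\tau$, concentrates on the deterministic fixed point $q^*$ as $n\to\infty$.

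The naive linearization of this recursion at $q^*$ yields only a Kesten--Stigum-type bound, which is known to be much weaker than the true reconstruction threshold for the hardcore model. To reach the claimed regime I would exploit the asymmetry of the recursion: most children $u$ are ``typical,'' with $q_u \approx q^*$, contributing $\log(1+q_u)\approx\log(1+q^*)$ to $-\log q_v$, whereas rare ``atypical'' children (whose subtrees strongly bias $X_u$ toward $1$) each contribute up to $\log(1+\lambda)$. The plan is to track a nonlinear potential such as a truncated second moment $\Phi_v = \E_\tau[\min\{|\log(q_v/q^*)|^2, C\}]$, or alternatively a tail probability $\p_\tau(|\log(q_v/q^*)|>t)$, and derive a recursive inequality $\Phi_v \le \alpha(\lambda,k)\,\max_u \Phi_u$ with $\alpha<1$. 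Controlling $\Phi_v$ reduces to bounding the fluctuations of $\sum_u \log(1+q_u)$ around its mean; I would split the sum into a typical part (bounded via a Chernoff / exponential-moment estimate) and an atypical residual (bounded via a union bound using that $\mu(X_u = 1)$ is small).

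The main obstacle is calibrating the typical/atypical split point so that both the Chernoff tail on the typical sum and the union bound on the atypical children are small, while simultaneously verifying that the resulting contraction factor $\alpha(\lambda,k)$ stays bounded away from $1$ throughout the depth-$n$ iteration. The scaling $\lambda \sim \ln^2 k/\ln\ln k$ should arise as the largest fugacity for which these two constraints are simultaneously feasible, and the sharp constant $(\ln 2)/2$ presumably comes from an optimized choice of split point and potential, balancing an entropic $\ln 2$ term against a $\ln\ln k$ correction in the Chernoff tail. Beyond setting up this recursive framework, the heart of the argument lies in executing this calibration with constants tight enough to extract $(\ln 2)/2$; the remainder is careful bookkeeping on the iteration.
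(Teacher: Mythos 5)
Your sketch aims at the right ballpark (a second-moment-type potential, Chernoff bounds, a typical/atypical decomposition of children), but it misses the structural insight that makes the paper's argument work, and as written it would run into a serious difficulty.

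The paper does not prove a uniform contraction of a potential from the initial boundary onward, which is what you propose. Instead it uses a two-phase argument. First, it carries out a \emph{finite-depth} direct computation: for the depth-$3$ tree, $\E^1_{T_3}[\p(\sigma_\rho=1\mid\sigma(L))]\le \tfrac12$ (Proposition~\ref{prop:expected-small}), which forces the second moment of the weighted magnetization, $\overline X(3)=\E_{T_3}[X^2]$, below $\omega/2$ (Lemma~\ref{lem:finite-levels}). Second, once $\overline X$ is this small, a \emph{linearized} BCMR-style recursion, $\overline X(n+1)\le \omega^2 e^{\omega k/2}k\,\overline X(n)$ (Theorem~\ref{thm:recursion}), drives it to zero because $\theta^2 k\approx \ln^2 k/k\ll 1$. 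Crucially, the linearized recursion contracts only once $\overline X$ is small; it does not give a useful bound starting from $\overline X$ of order $1$. Your proposal dismisses linearization across the board, but linearization is in fact the tool the paper uses for the iteration --- what it cannot do is supply the initial smallness, and that is where the real work lies. A single recursive inequality $\Phi_v\le\alpha(\lambda,k)\max_u\Phi_u$ with $\alpha<1$ uniformly in the value of $\Phi_u$ would be strictly stronger than what the paper establishes, and there is no indication such a bound is available in the claimed $\lambda$-regime.

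Where the constant $\ln 2$ actually enters is not in an ``optimized choice of split point and potential'' but in the depth-$3$ computation. The paper identifies a specific set $\mathcal A$ of depth-$2$ leaf configurations for which the posterior at a child $u_i$ equals $\tfrac12\bigl(1+\tfrac1{1+2\lambda}\bigr)\approx \tfrac12$, computes that under $\p^0$ each child lands in $\mathcal A$ with probability $p\gtrsim e^\beta\ln\ln k/k$, and then applies a Chernoff bound to $\mathrm{Bin}(k,p)$ to show that typically at least about $e^\beta\ln\ln k$ of the $k$ children land in $\mathcal A$. Those children each contribute a factor close to $\tfrac12$ inside $\lambda\prod_i \p^0[\sigma_{u_i}=0\mid\cdot]$, so the product is roughly $\lambda\cdot 2^{-e^\beta\ln\ln k}$, and requiring this to be $O(1)$ is exactly what pins down $\beta>\ln 2-\ln\ln 2$, i.e.\ $\lambda<\tfrac{(\ln 2-o(1))\ln^2 k}{2\ln\ln k}$. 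Your sketch attributes the constant to an ``entropic $\ln 2$'' in a Chernoff tail, which is not the mechanism; the $\ln 2$ is the $\log$ of the $\tfrac12$ posterior value. Finally, a minor technical point for your formulation: the log-odds $\log(q_v/q^*)$ is $-\infty$ with positive probability (whenever a child is in state $1$ the root posterior on $1$ is zero), so the potential $\Phi_v$ must be defined and propagated with care around this atom; the paper's magnetization $X$, which is bounded in $[-1/\D,1]$, sidesteps this entirely.
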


\subsection{The Hardcore Model}
\label{sec:hardcore_model}

For a finite graph $G$ the independent sets $I(G)$ are subsets of the
vertices containing no adjacent vertices.  The hardcore model is a
probability measure over $\sigma\in I(G) \subset \{0,1\}^G$ such that
\begin{equation}\label{e:defnHardcore}
\p(\sigma)=\frac1{Z} \lambda^{\sum_{v\in G}\sigma_v} \mathbbm{1}_{\sigma\in I(G)}
\end{equation}
where $\lambda$ is the \emph{fugacity} parameter and $Z$ is a
normalizing constant.  The definition of the hardcore model can be
extended to infinite graphs by way of the Dobrushin-Lanford-Ruelle
condition which essentially says that for every finite set $A$ the
configuration on $A$ is given by the Gibbs distribution given by a
random boundary generated by the measure outside of $A$.  Such a
measure is called a Gibbs measure and there may be one or
infinitely many such measures (see e.g. \cite{Georgii:88} for more
details).  For every $\lambda$, there exists a unique translation
invariant Gibbs measure on the $k$-regular tree and it is this measure
which we study.

An alternative equivalent formulation of the hardcore model is as a
Markov model on the tree.  An independent set $\sigma$ is generated by
first choosing the root according to the distribution
\[
(\pi_1,\pi_0) = \left(\frac{\w}{1+2\w} \ , \
  \frac{1+\w}{1+2\w}\right)
\]
for some $0<\omega<1$.  The states of the remaining vertices of the graph are generated from their parents' states by taking one step of the Markov transition matrix
$$M = \left(
\begin{array}{cc}
p_{11} & p_{10}\\
p_{01} & p_{00}
\end{array}
\right)
=
\left(
\begin{array}{cc}
0 & 1\\
\frac{\w}{1+\w} & \frac{1}{1+\w}\\
\end{array}
\right).
$$
It can easily be checked that $\pi$ is reversible with respect to $M$
and that this generates a translation invariant Gibbs measure on the
tree with fugacity
\[
\lambda=\omega(1+\omega)^{k-1}.
\]
Restating  Theorem~\ref{t:main} in terms of $\omega$ we have
non-reconstruction when
\begin{equation}\label{e:omegaBound}
\omega \leq \frac1k\Big[ \ln k + \ln
  \ln k - \ln \ln \ln k  - \ln 2 +  \ln \ln 2 - o(1) \Big]=:\ob.
\end{equation}
We will introduce some further notation which we will make use of in the proof.
\begin{eqnarray}
\nonumber \pi_{01} \equiv \frac{\pi_0}{\pi_1} = \frac{1+\w}{\w}, \ \ \ \ \ \
\ \D \equiv \pi_{01} -1 = \frac{1}{\w},
\\ \nonumber \theta \equiv
p_{00} -p_{10} = p_{11}-p_{01} = - \frac{\w}{1+\w}
\end{eqnarray}
A particularly important role is played by $\theta$, the second
eigenvalue of $M$ as is discussed in the following subsection. We
denote by $\p^1_T,\E^1_T$ (and resp. $\p^0_T,\E^0_T$ and
$\p_T,\E_T$) the probability and expectations with respect to the
measure obtained by conditioning on the root $\rho$ of $T$ to be 1 (resp. 0, and
stationary). We let $L=L(n)$ denote the vertices at depth $n$ and
$\sigma(L)=\sigma(L(n))$ denote the configuration on level $n$.  We
will write $\Pr_T[\cdot|\sigma(L)=A]$ to denote the measure
conditioned on the leaves being in state $A\in\{0,1\}^{L(n)}$.

\subsection{The reconstruction problem}

The reconstruction problem on the tree essentially asks if we can
recover information on the root from the spins deep inside the tree.
In particular we say that the model has \emph{non-reconstruction} if
\begin{eqnarray}\label{eq:non-rec}
\Pr_T[\sigma_\rho=1|\sigma(L)]\to \pi_1
\end{eqnarray}
in probability as $n\to\infty$, otherwise the model has
\emph{reconstruction}. Equivalent formulations of non-reconstruction
are that the Gibbs measure is extremal or that the tail
$\sigma$-algebra of the Gibbs measure is trivial~\cite{MP}.  It
follows from Proposition 12 of~\cite{Mossel} that there exists a
$\lambda_R$ such that  reconstruction holds for $\lambda > \lambda_R$
and non-reconstruction holds for $\lambda < \lambda_R$.
The reconstruction problem is to determine the threshold $\lambda_R$.

\subsection{Related Work}

A significant body of work has been devoted to the reconstruction problem on the tree by probabilists, computer scientists and physicists.  The earliest such result is the Kesten-Stigum bound~\cite{KS} which states that reconstruction holds whenever $\theta^2 (k-1) > 1$.  This bound was shown to be tight in the case of the Ising model~\cite{BRZ,EKPS} where it was shown that non-reconstruction holds when $\theta^2 (k-1) \leq 1$.  Similar results were derived for the  Ising model with small external field~\cite{BKMP} and the 3-state Potts model~\cite{Sly:09} which constitute the only models for which exact thresholds are known.  On the other hand, at least when $k$ is large, the Kesten-Stigum bound is known not to be tight for the hardcore model~\cite{BW}.  As such, the most one can reasonably ask to show is the asymptotics of the reconstruction threshold $\lambda_R(k)$ for large $k$.

The Kesten-Stigum bound is known to be the correct bound for robust reconstruction for all Markov models~\cite{JM}.  Robust reconstruction asks whether reconstruction is possible after adding a large amount of noise to the spins in level $n$.  It was shown in~\cite{JM} that when $\theta^2(k-1)<1$ after adding enough noise to the spins at level $n$, the ``information'' provided by the modified spins at level $n$ decays exponentially quickly.

In both the colouring model and the hardcore model the reconstruction
threshold is far from the Kesten-Stigum bound for large $k$.  In the
case of the hardcore model $\theta^2 (k-1)= (1+o(1)\frac1k \ln^2 k$.
As such, given a noisy version of the spins at level $n$, the
information on the root decays rapidly as $n$ grows.  In the colouring
model close to optimal bounds~\cite{BVVW,Sly:08} were obtained by
first showing that, when $n$ is small, the information on the root is
sufficiently small.
Then  a quantitative version
of~\cite{JM} establishes that the information on the root converges to
0 exponentially quickly.  The hardcore model behaves similarly.
Indeed, the form of our bound in equation~(\ref{e:omegaBound}) is
strikingly similar to the bound for the $q$-coloring model which
states that reconstruction (resp. non-reconstruction) holds when the
degree is at least (resp. at most) $q[\ln q + \ln \ln q + O(1)]$.

Our proof then proceeds as follows.  We first establish that when
$\omega$ satisfies (\ref{e:omegaBound}) then even for a tree of depth
3 there is already significant loss of information of the spin at the
root.  In particular we show that if the state of the root is 1 then the typical
posterior probability that the state of the root is 1 given the spins at level 3
will be less than $\frac12$. The result is completed by linearizing
the standard tree recursion as in~\cite{BCMR,Sly:09}.
In this part of the proof we closely follow the notation of~\cite{BCMR} who analyzed the reconstruction problem for the Ising model with small external field. We do not require the full strength of their analysis as in our case we are far from the Kesten-Stigum bound.
We show that a quantity which we refer to as the \emph{magnetization} decays
exponentially fast to~0.    The magnetization provides a bound on the
posterior probabilities and this completes the result.

\subsubsection*{Replica Symmetry Breaking and Finding Large Independent Sets}

The reconstruction problem plays a deep role in the geometry of the
space of solutions of random constraint satisfaction problems.  While
for problems with few constraints the space of solutions is connected
and finding solutions is generally easy, as the number of constraints
increases the space may break into exponentially many small clusters.
Physicists, using powerful but non-rigorous ``replica symmetry
breaking'' heuristics, predicted that the clustering phase transition
exactly coincides with the reconstruction region on the associated
tree model~\cite{MM,KMRSZ}.
This picture was rigorously established (up to first order terms) for the colouring and
satisfiability problems~\cite{AchCA} and further extended to sparse random graphs by~\cite{Montanari_Restrepo_T}.
As solutions are far apart,
local search algorithms will in general fail. Indeed for both the
colouring and SAT models, no algorithm is known to find solutions in
the clustered phase.  It has been conjectured to be computationally
intractable beyond this phase transition~\cite{AchCA}.
\linebreak

The associated CSP for the hardcore model  corresponds to finding
large independent sets in random $k$-regular graphs. The replica
heuristics again predict that the space of large independent sets
should be clustered in the reconstruction regime.  Specifically this
refers to independent sets of size $s n$ where $s > \pi_1(R)$, the
density of 1's in the hardcore model at the reconstruction threshold.
It is known that the largest independent set is with high probability
$\frac{(2-o(1))\ln k}{k} n$~\cite{CFRR}. On the other hand the best
known algorithm finds independent sets only of size $\frac{(1+o(1))\ln
  k}{k}n$ which is equal to $\pi_1(R) n$~\cite{Wormald}.  This is
consistent with the physics predictions and it would be of interest to
determine if the space of independent sets indeed exhibits the same
clustering phenomena as colourings and SAT at the reconstruction
threshold.  Determining the reconstruction threshold more precisely
thus has implications for the problem of finding large independent
sets in random graphs.

\subsubsection*{Glauber Dynamics on trees}

The reconstruction threshold plays a key role in the study of the rate of convergence of the Glauber dynamics markov chain for sampling spin systems on trees.  This problem has received considerable attention~(see e.g. \cite{BKMP,DLP,MSW,MSW2,TVVY}) and in the case of the Ising model, the mixing time is known to undergo a phase transition from $\theta(n\ln n)$ in the non-reconstruction regime to $n^{1+\theta(1)}$ in the reconstruction regime~\cite{BKMP}.  In fact, the mixing time is $n^{1+\theta(1)}$ for any spin system above the reconstruction threshold.  A similar transition was shown to take place for the colouring model~\cite{TVVY}.  Sharp bounds of this type are not known from the hardcore model, however, it is predicted that the Glauber dynamics should again be $O(n \log n)$ in the non-reconstruction regime.

\section{Proof of Theorem~\ref{t:main}}

It is simple to show that non-reconstruction on the $k$-regular tree
is equivalent to non-reconstruction on the $(k-1)$-regular tree.  For
ease of notation we establish our bounds for the $k$-ary tree noting
that in equation~(\ref{e:omegaBound}) we have that $\ob(k+1)-\ob(k) =
o(k)$ so the difference can be absorbed in the error term.  Let
$\mathcal T$ denote the infinite $k$-ary tree  and let $T_n$ denote
the restriction of $\mathcal T$ to its first $n$ levels.

Before reading further, it might help the reader to quickly recall the notation from the end of Section~\ref{sec:hardcore_model}.
As in~\cite{BCMR} we analyse a random variable $X$ which denotes {\em
  weighted magnetization of the root} which is a function of the leaf
states of the tree.    We define $X=X(n)$ on $T_n$ by
\begin{eqnarray}\nonumber
X & = &\pi_0^{-1}[\pi_0\p(\sigma_\rho=1|A) - \pi_1\p(\sigma_\rho=0|A)] \\
\label{eq:mag2} & = &
\frac{1}{\pi_{01}}\left[\frac{\p[\sigma_\rho=1|A]}{\pi_1}-1\right]
\end{eqnarray}
Since $\E_T[\p[\sigma_{\rho}=1|A]] = \p[\sigma_\rho=1]=\pi_1$, from
the above expression, we
have that $\E[X]=0$. Also, $X \leq 1$ since $\p[\sigma_{\rho}=1|A] \leq
1$. We will make extensive use of the following second moments of the
magnetization.
\begin{eqnarray}\nonumber
\overline X = \E_T[X^2], \ \ \ \overline X_1 = \E_T^1[X^2], \ \ \
\overline X_0 = \E_T^0[X^2]
\end{eqnarray}
With these definitions in hand, by the definition in (\ref{eq:non-rec}) we can
characterize non-reconstruction as follows.
\begin{proposition}\label{p:bar-x-non-reconst} Non-reconstruction for
  the model $(\mathcal T,M)$ is equivalent
  to $$\lim_{n \rightarrow \infty} \overline X(n) = 0,$$
where $\overline X(n) = \E_{T_n}[X^2]$.
\end{proposition}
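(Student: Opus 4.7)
The plan is to establish the equivalence in both directions, where the only nontrivial input is that $X$ is uniformly bounded, so convergence in probability can be upgraded to convergence in $L^2$. Rearranging (\ref{eq:mag2}) one has
\[
X = \frac{1}{\pi_{01}}\left[\frac{\p_T[\sigma_\rho=1\mid \sigma(L)]}{\pi_1}-1\right],
\]
so $X$ is simply an affine function of the posterior probability $\p_T[\sigma_\rho=1\mid\sigma(L)]$. Since this posterior lies in $[0,1]$ and $\pi_0,\pi_1,\pi_{01}$ do not depend on $n$, the random variable $X=X(n)$ takes values in a fixed bounded interval (indeed, as already noted in the paper, $|X|\le 1$).

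For the implication $\overline X(n)\to 0 \Rightarrow$ non-reconstruction, I would argue as follows: $L^2$ convergence implies convergence in probability, so $X(n)\to 0$ in probability. By the identity above, this is equivalent to $\p_T[\sigma_\rho=1\mid\sigma(L)]\to \pi_1$ in probability, which is precisely the definition (\ref{eq:non-rec}) of non-reconstruction.

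For the converse, assume non-reconstruction holds. Then by definition $\p_T[\sigma_\rho=1\mid\sigma(L)]\to\pi_1$ in probability, and the affine relation above gives $X(n)\to 0$ in probability. Since $X(n)$ is bounded by a constant independent of $n$, the bounded convergence theorem yields $\overline X(n)=\E_T[X(n)^2]\to 0$.

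The argument has essentially no obstacle beyond the definitions: the only probabilistic ingredient is the bounded convergence theorem, which is justified by the uniform bound $|X|\le 1$. The entire content of the proposition is that the weighted magnetization is a bounded affine reparametrisation of the root posterior, so smallness in $L^2$ and convergence in probability of the posterior are equivalent.
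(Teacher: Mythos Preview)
Your argument is correct and is exactly the justification the paper leaves implicit: the paper states the proposition as following directly from the definition (\ref{eq:non-rec}) without giving a separate proof, and your use of the affine relation plus bounded convergence is precisely the routine verification this entails. One very minor point: the paper only records $X\le 1$, not $|X|\le 1$, but your observation that the posterior lies in $[0,1]$ immediately gives the two-sided bound you need.
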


In the remainder of the proof we derive bounds for $\overline
X$. We begin by showing that already for a 3 level tree, $\overline X$
becomes small.  Then we establish a recurrence along the lines of
\cite{BCMR} that shows that once $\overline X$ is sufficiently small,
it must converge to 0.  As this part of the derivation follows the
calculation in \cite{BCMR} we will adopt their notation in
places.  Non-reconstruction is then a consequence of
Proposition~\ref{p:bar-x-non-reconst}.  In the next lemma we
determine some basic properties of $X$.

\begin{lemma}\label{lem:basic-relations} The following relations hold:
\begin{enumerate}[a)]
\item $\E_T[X] = \pi_1\E^1_T[X] + \pi_0\E^0_T[X] = 0.$
\item $\overline X = \pi_1\overline X_1 + \pi_0 \overline X_0.$
\item $\E_T^1[X] = \pi_{01} \overline X$ and $\E_T^0[X] = - \overline X.$
\end{enumerate}
\end{lemma}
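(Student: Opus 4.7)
The plan is to derive all three identities directly from the definition (\ref{eq:mag2}) of $X$, using the tower property of conditional expectation together with a Bayes' identity that converts expectations under $\p_T^1$ or $\p_T^0$ into expectations under $\p_T$.

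For (a), the first equality is just conditioning on $\sigma_\rho$ in the law of total expectation, noting that $X$ is a function of the leaf configuration $A$ and hence is well defined under the conditioned measures. The second equality is essentially the remark already made in the text that $\E_T[\p(\sigma_\rho=1|A)]=\pi_1$ by the tower property, which when substituted into (\ref{eq:mag2}) gives $\E_T[X]=\pi_{01}^{-1}(1-1)=0$. Part (b) follows from exactly the same conditioning identity, applied this time to $X^2$ rather than $X$.

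The main content is (c). I will use the Bayes tilt
\[
\E_T^1[f(A)] \;=\; \pi_1^{-1}\, \E_T\bigl[f(A)\,\p(\sigma_\rho=1\mid A)\bigr],
\]
valid for any function $f$ of the leaf configuration, which is immediate from $\p_T(A \mid \sigma_\rho=1) = \p_T(\sigma_\rho=1\mid A)\,\p_T(A)/\pi_1$. Inverting (\ref{eq:mag2}) gives $\p(\sigma_\rho=1|A) = \pi_1(\pi_{01} X + 1)$, so applying the identity with $f=X$ collapses $\E_T^1[X]$ into $\pi_{01}\,\E_T[X^2] + \E_T[X]$, which equals $\pi_{01}\overline X$ by part (a). The companion identity $\E_T^0[X] = -\overline X$ then falls out of the first equation in (a), since $\pi_1/\pi_0 = \pi_{01}^{-1}$.

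There is no real obstacle here; the only conceptual observation is that the definition of $X$ is a linear reparametrization of the posterior $\p(\sigma_\rho=1|A)$, so the Bayes tilt by $\p(\sigma_\rho=1|A)$ automatically turns a first moment of $X$ under $\p_T^1$ into a second moment of $X$ under $\p_T$. The rest is bookkeeping and can be written in a few lines.
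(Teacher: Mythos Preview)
Your proposal is correct and follows essentially the same approach as the paper: parts (a) and (b) come from decomposing $\E_T[f]$ as $\pi_1\E_T^1[f]+\pi_0\E_T^0[f]$, and part (c) is proved via the Bayes identity $\E_T^1[f(A)]=\pi_1^{-1}\E_T[f(A)\,\p(\sigma_\rho=1\mid A)]$, which converts the first moment under $\p_T^1$ into a second moment under $\p_T$. The paper carries this out by writing the sum over leaf configurations explicitly, while you phrase it abstractly as a change of measure and substitute $\p(\sigma_\rho=1\mid A)=\pi_1(\pi_{01}X+1)$, but the computations are identical in content.
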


\begin{proof}
Note that for any random variable which depends only on the states at
the leaves, $f=f(A)$, we have $E_T[f] = \pi_1\E^1_T[f] +
\pi_0\E^0_T[f]$. Parts $a)$ and $b)$ therefore follow since $X$ is a random
variable that is a function of the states at the leaves.
For part $c)$ we proceed as follows. The first and last equalities
below follow from (\ref{eq:mag2}).
\begin{eqnarray}
\nonumber \E_T^1[X] & = &
\pi_{01}^{-1}\displaystyle\sum_{A}\p_T[\sigma_{L}=A|\sigma_\rho=1]\left(
\frac{\p_T[\sigma_\rho=1|A]}{\pi_1}-1 \right) \\
\nonumber & = &
\pi_{01}^{-1} \displaystyle\sum_{A}\p_T[\sigma_{L}=A]\frac{\p_T[\sigma_\rho=1|A]}{\pi_1}\left(
\frac{\p_T[\sigma_\rho=1|A]}{\pi_1} -1 \right) \\
\nonumber & = &
\pi_{01}^{-1}\left( \frac{\E_T[(\p_T[\sigma_\rho=1|A])^2]}{\pi_1^2} -1
\right) \\
\nonumber & = & \pi_{01}\E[X^2]
\end{eqnarray}
The second part of $c)$ follows by combining this with $a)$. \qed
\end{proof}

The following proposition estimates typical posterior probabilities
which we will use to bound $\overline X$.  For a finite tree $T$ let
$T^i$ be the subtrees rooted at the children of the root $u_i$.

\begin{proposition}\label{prop:expected-small}
For a finite tree $T$ we have that
\begin{enumerate}[a)]
\item For any configuration at the leaves
  $A=(A_1,\cdots,A_k)$,
\begin{eqnarray}
\nonumber \p_T[\sigma_\rho=0|\sigma_L=A]=\Bigl({1+\lambda
  \prod_{i}\p_{T^i}[\sigma_{u_i}=0 | \sigma_{L_i} = A_i]}\Bigr)^{-1}.
\end{eqnarray}

\item Let $\mathcal A$ be the set of leaf
  configurations
\begin{eqnarray}
\nonumber \mathcal A = \left\{\sigma(L) \ | \ \p[\sigma_\rho=0|\sigma(L)] =
\frac{1}{2}\left(1+\frac{1}{1+2\lambda}\right) \right\}.
\end{eqnarray}
Then
\begin{eqnarray}
\nonumber \frac{\p^0_T[\sigma(L) \in \mathcal A]}{\p^1_T[\sigma(L) \in
    \mathcal A]} =
\frac{\pi_1}{\pi_0}\frac{1+\lambda}{\lambda}.
\end{eqnarray}

\item Let $\beta>\ln 2 - \ln \ln 2$ and $\w = \frac1k\big[\ln k + \ln
  \ln k - \ln
  \ln \ln k -\beta \big]$. Then in the~3 level $k$-ary tree $T_3$ we have that
  $$\E^1_{T_3}[\p[\sigma_\rho=1 | \sigma(L)]] \leq \frac{1}{2}.$$

\end{enumerate}

\end{proposition}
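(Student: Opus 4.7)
The plan is to apply part (a) twice to express $\p[\sigma_\rho = 1 | \sigma(L)]$ on $T_3$ explicitly as a function of integer counts, and then estimate the resulting quantity under $\p^1_{T_3}$ by a first-moment/Chebyshev argument. Applying (a) to $T_3$ gives
\[
\p_{T_3}[\sigma_\rho = 0 \mid \sigma(L) = A] = \Big(1 + \lambda \prod_{i=1}^k q_i\Big)^{-1}, \qquad q_i := \p_{T^i}[\sigma_{u_i} = 0 \mid A_i],
\]
where $T^i$ is the $2$-level subtree rooted at the child $u_i$ of $\rho$. A second application of (a) to each $T^i$, using that for the $1$-level subtree rooted at a grandchild $v_{ij}$ the posterior $\p[\sigma_{v_{ij}} = 0 \mid A_{ij}]$ equals $1$ if some leaf under $v_{ij}$ is assigned $1$ and equals $1/(1+\lambda)$ if all leaves of $v_{ij}$ are $0$, yields
\[
q_i = \frac{1}{1 + \lambda\,(1+\lambda)^{-N_i}},
\]
where $N_i$ is the number of grandchildren $v_{ij}$ all of whose leaf descendants are $0$.

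Under $\p^1_{T_3}$ the $u_i$'s are forced to be $0$, the subtrees $T^i$ are independent, and given $u_i = 0$ the grandchildren are i.i.d.\ Bernoulli$(p_{01})$. A short conditioning argument shows that each $v_{ij}$ contributes to $N_i$ with probability $p^* := p_{01} + p_{00}^{k+1}$ (either because $v_{ij} = 1$ and the leaves are forced to $0$, or because $v_{ij} = 0$ and its $k$ leaves happen to all be $0$), so the $N_i$'s are i.i.d.\ Binomial$(k, p^*)$.

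The heart of the argument is the moment estimate for $Z_i := \ln(1 + \lambda(1+\lambda)^{-N_i}) = -\ln q_i$. Using $\w k = \ln k + \ln\ln k - \ln\ln\ln k - \beta$ one computes
\[
\p[N_i = 1] \sim k p^*(1-p^*)^{k-1} \sim (\w k) e^{-\w k} = (1+o(1))\, \frac{e^\beta \ln\ln k}{k}.
\]
Since $Z_i \to \ln 2$ when $N_i = 1$ and $\lambda \to \infty$, and the contributions from $N_i = 0$ and $N_i \geq 2$ are of strictly lower order, we obtain
\[
\E\Big[\sum_i Z_i\Big] = (1+o(1))\, e^\beta \ln 2 \cdot \ln\ln k,
\]
whereas $\ln\lambda = (2+o(1))\ln\ln k$. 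Thus the hypothesis $\beta > \ln 2 - \ln\ln 2$, equivalent to $e^\beta \ln 2 > 2$, is exactly what makes the gap $\E[\sum_i Z_i] - \ln\lambda$ diverge to $+\infty$ at rate $\Theta(\ln\ln k)$. An analogous calculation yields $\mathrm{Var}(\sum_i Z_i) = O(\ln\ln k)$, so Chebyshev's inequality gives $\p^1[\sum_i Z_i < \ln(4\lambda)] = O(1/\ln\ln k)$.

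On the event $\{\sum_i Z_i \geq \ln(4\lambda)\}$ we have $\lambda \prod_i q_i \leq 1/4$ and hence $\p[\sigma_\rho = 1|A] \leq 1/5$, while on the complementary event we use only $\p[\sigma_\rho = 1|A] \leq 1$. Together this gives $\E^1_{T_3}[\p[\sigma_\rho = 1 | \sigma(L)]] \leq 1/5 + o(1) < 1/2$ for $k$ sufficiently large. The main obstacle is the delicate moment calculation: since the coefficient $e^\beta \ln 2 - 2$ vanishes precisely at $\beta = \ln 2 - \ln\ln 2$, one must extract the leading $N_i = 1$ contribution with the correct constant and verify that the contributions from $N_i = 0$ (smaller by a factor of $\ln k$), from $N_i \geq 2$ (controllable via $\lambda\,\E[(1+\lambda)^{-N_i}; N_i \geq 2]$), and from the $p_{00}^{k+1}$ correction to $p^*$ are all of strictly lower order and do not spoil the leading calculation.
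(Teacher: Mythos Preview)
Your argument is correct and hinges on the same phenomenon the paper exploits: under $\p^1_{T_3}$ each depth-$2$ subtree has, with probability $\sim e^\beta(\ln\ln k)/k$, exactly one grandchild whose leaves are all $0$ (your event $\{N_i=1\}$, the paper's event $\mathcal A$), and on that event $q_i\approx\tfrac12$; concentration of the number of such subtrees around $e^\beta\ln\ln k$ then forces $\lambda\prod_i q_i\to 0$ precisely when $e^\beta\ln 2>2$.

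The execution differs in two ways worth noting. First, you never use part~(b): you compute the $\p^0_{T_2}$-law of $N_i$ directly via $p^*=p_{01}+p_{00}^{k+1}$, whereas the paper first computes $\p^1_{T_2}[\mathcal A]$ and then invokes~(b) to pass to $\p^0_{T_2}[\mathcal A]$. Your route is self-contained but makes part~(b) look unused in the proposition. Second, the paper works with the \emph{count} $\#\{i:N_i=1\}$ (a sum of Bernoullis, handled by Chernoff with the trivial bound $q_i\le 1$ elsewhere), while you work with the real-valued sum $\sum_i Z_i=-\sum_i\ln q_i$ and a first/second moment computation followed by Chebyshev. The paper's version is shorter because it sidesteps the need to check that the $N_i=0$ and $N_i\ge 2$ contributions are lower order; your version gives slightly more, namely $\E^1_{T_3}[\p(\sigma_\rho=1\mid\sigma(L))]\le \tfrac15+o(1)$ rather than the paper's bound of roughly $\tfrac13+o(1)$, but at the cost of that extra bookkeeping.
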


\begin{proof} Part $a)$ is a consequence of standard tree recursions
  for Markov models established using Bayes rule.

For part $b)$ first note that
\begin{eqnarray}\label{e:calARelation}
\p[\sigma_\rho=1 \ | \ \sigma(L) \in \mathcal A] & = & 1-\p[\sigma_\rho=0
  \ | \ \sigma(L) \in \mathcal A] \nonumber\\
& = & \frac{1}{2}\left(1-\frac{1}{1+2\lambda}\right)
\end{eqnarray}
Now,
\begin{eqnarray}
\nonumber \p^0_T[\sigma(L) \in \mathcal A]  & = & \frac{\p[\sigma_\rho=0 \ |
    \ \sigma(L) \in \mathcal A] \p[ \sigma(L) \in \mathcal A]}{\pi_0}\\
\nonumber & = &  \frac{\pi_1}{\pi_0} \frac{1+\lambda}{\lambda}
\left(\frac{\p[\sigma_\rho=1 \ |
    \ \sigma(L) \in \mathcal A] \p[ \sigma(L) \in \mathcal
    A]}{\pi_1}\right)\\
\nonumber & = & \frac{\pi_1}{\pi_0}
\frac{1+\lambda}{\lambda}\p^1_T[\sigma(L) \in \mathcal A]
\end{eqnarray}
where the first and third equations follow by definition of
conditional probabilities and the second follows
from~(\ref{e:calARelation}) which establishes $b)$.

For part $c)$, we start  by calculating the probability of certain
posterior probabilities for  trees of small depth. Note that with our
assumption on $\omega$ we have that
\[
\lambda=\omega(1+\omega)^{k}=\frac{ e^{-\beta}  \ln^2 k}{ \ln \ln k}
\]
By part $a)$, since $\sigma(L)\equiv 1$ under $\p^1$ we have that
\[
\p^1_{T_1}[\sigma_\rho=0|\sigma(L)] =  \frac{1}{1+\lambda}  \ w. p. \ 1.
\]

Also,
\[
\p_{T_1}(u_i=0 \ \forall \ i | \sigma_\rho=0) =
\left(\frac{1}{1+\w}\right)^k 
\]

Using the two equations above, we have that

\[
\p^0_{T_1}(\sigma_\rho=0|\sigma(L)) = \left\{
\begin{array}{ll}
1 & \ w.p. \  1- \left(\frac{1}{1+\w}\right)^k \\
\frac{1}{1+\lambda} & \ w.p. \  \left(\frac{1}{1+\w}\right)^k
\end{array}\right.
\]

Applying part $a)$ to a tree of depth $2$, we have

\[
\p_{T_2}^1[\sigma_\rho=0 | \sigma(L)] = \frac{1}{1+\lambda
  \prod_{i}\p_{T_1}^0[\sigma_{u_i}=0 | \sigma(L)]}
\]

Therefore

\begin{eqnarray}\label{eq:p-t-2}
\p_{T_2}^1[\sigma_\rho=0 | \sigma(L)] = \left\{
\begin{array}{ll}
\frac{1}{1+\lambda} &
\ w.p.\ \left(1-(\frac{1}{1+\w})^k\right)^k
\\
\frac{1}{2}\left(1+\frac{1}{1+2\lambda}\right) &
\ w.p. \ \left(1-(\frac{1}{1+\w})^k\right)^{k-1}  \left(\frac{1}{1+\w}\right)^kk
\\
> \frac{1}{2}\left(1+\frac{1}{1+2\lambda}\right) & \ o.w.
\end{array}\right.
\end{eqnarray}

By part $b)$ with $\mathcal A$ as defined, and (\ref{eq:p-t-2}) we
have that after substituting the expressions for $\lambda$ and
$\omega$,
\begin{eqnarray}
\p_{T_2}^0[\sigma(L) \in \mathcal A] & = &
\frac{\pi_1}{\pi_0}\frac{1+\lambda}{\lambda} \p_{T_2}^1[\sigma(L) \in
  \mathcal A] \nonumber\\
& = & \frac{\w(1+\lambda)}{\lambda(1+\w)}
\left(1-\left(\frac{1}{1+w}\right)^k\right)^{k-1}
\left(\frac{1}{1+\w}\right)^k k\nonumber\\
\label{eq:bound-on-p} & \geq & (1-o_k(1))\frac{e^{\beta} \ln \ln k }{k}
\end{eqnarray}

We can now calculate the values of $P_{T_3}^1[\sigma_\rho=0 | \sigma(L)]$
as follows. By part $a)$

\[
\p_{T_3}^1[\sigma_\rho=0 | \sigma(L)]  = \frac{1}{1+\lambda
  \prod_{i}\p_{T_2}^0[\sigma_{u_i}=0 | \sigma(L)]}
\]

Denote
\[
p = \frac{\w(1+\lambda)}{\lambda(1+\w)}
\left(1-\left(\frac{1}{1+w}\right)^k\right)^{k-1}\left(\frac{1}{1+\w}\right)^k k
\]

By Chernoff bounds, and the bound on $p$ from
(\ref{eq:bound-on-p}), $$\p(Bin(k,p)< e^{\beta}\ln \ln k -
2\sqrt{e^{\beta}\ln \ln k})< \frac{1}{3}.$$ Finally, by the definition
of $\mathcal A$,

\[
\p_{T_2}^0[\sigma_{u_i}=0 | \sigma(L) \in \mathcal A] =
\frac{1}{2}\left(1+\frac{1}{1+2\lambda}\right)
\]

and hence,

\[
\E^1_{T_3}[\p[\sigma_\rho=1 | \sigma(L)]] \leq \left(1- \frac{1}{1+\lambda
  [2(1-o_k(1))]^{-(e^\beta \ln \ln k - 2\sqrt{e^\beta \ln \ln k)}}} \right)\frac{2}{3}
+\ \frac{1}{3}
\]
By taking $k$ large enough above, we conclude that for $\beta$ and
large enough $k$,
\[
\E^1_{T_3}[\p[\sigma_\rho=1 | \sigma(L)]] \leq \frac{1}{2}
\]
\qed
\end{proof}

\begin{lemma}\label{lem:finite-levels}Let $\beta>\ln 2 - \ln \ln 2$
  and $\w = \frac1k\big[\ln k + \ln \ln k - \ln
  \ln \ln k -\beta \big]$.  For $k$ large enough,
$$\overline X(3) \leq \frac{\w}{2}.$$
\end{lemma}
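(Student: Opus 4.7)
The plan is to express $\overline X(3)$ directly in terms of the quantity $\E^1_{T_3}[\p[\sigma_\rho=1|\sigma(L)]]$, which has already been bounded by $1/2$ in Proposition~\ref{prop:expected-small}(c), and then simplify using the explicit expressions for $\pi_1$ and $\pi_{01}$.

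First I would use Lemma~\ref{lem:basic-relations}(c), namely $\E^1_T[X] = \pi_{01} \overline X$, to write $\overline X = \pi_{01}^{-1} \E^1_T[X]$. Expanding this using the definition~(\ref{eq:mag2}) of $X$ and pulling the expectation inside the linear expression gives the key identity
$$\overline X = \pi_{01}^{-2}\left(\frac{\E^1_T[\p[\sigma_\rho=1|A]]}{\pi_1} - 1\right).$$
Applying Proposition~\ref{prop:expected-small}(c) with $T = T_3$ (whose hypothesis $\beta > \ln 2 - \ln\ln 2$ matches that of the lemma) bounds the expectation by $1/2$.

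It remains to simplify. Using $\pi_1 = \omega/(1+2\omega)$, we have $(2\pi_1)^{-1} - 1 = (1+2\omega)/(2\omega) - 1 = 1/(2\omega)$, and using $\pi_{01} = (1+\omega)/\omega$ gives $\pi_{01}^{-2} = \omega^2/(1+\omega)^2$. Combining these,
$$\overline X(3) \leq \frac{\omega^2}{(1+\omega)^2} \cdot \frac{1}{2\omega} = \frac{\omega}{2(1+\omega)^2} \leq \frac{\omega}{2},$$
as required.

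There is no real obstacle to overcome here; the lemma is essentially a one-line bookkeeping computation that translates the posterior-probability bound of Proposition~\ref{prop:expected-small}(c) into the language of the second moment $\overline X$ used in the downstream recurrence. The only thing to be mildly careful about is the placement of the $\pi_{01}$ normalizations in the definition of $X$ versus Lemma~\ref{lem:basic-relations}(c), which is what generates the factor $\pi_{01}^{-2}$ rather than $\pi_{01}^{-1}$ in the key identity.
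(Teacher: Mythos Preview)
Your proof is correct and is essentially identical to the paper's own argument: the paper also combines Lemma~\ref{lem:basic-relations}(c) with the definition~(\ref{eq:mag2}) to obtain $\overline X(3) = \pi_{01}^{-2}\bigl(\E^1_{T_3}[\p[\sigma_\rho=1|\sigma(L)]]/\pi_1 - 1\bigr)$, applies Proposition~\ref{prop:expected-small}(c), and bounds the result by $\omega/2$. Your version actually spells out the intermediate value $\omega/(2(1+\omega)^2)$, which the paper leaves implicit.
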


\begin{proof}
By part $c)$ of Lemma \ref{lem:basic-relations}, and part $c)$ of
Proposition \ref{prop:expected-small},
\begin{eqnarray*}
\overline X(3) & = & \frac{1}{\pi_{01}^{2}} \left(
\frac{\E^1_{T_3}[\p[\sigma_\rho=1 \ | \ \sigma(L)]] }{\pi_1}- 1 \right) \\
& \leq & \frac{1}{\pi_{01}^{2}} \left(
\frac{1}{2\pi_1}- 1 \right)\\
& \leq  & \frac{\w}{2}
\end{eqnarray*}
\end{proof}

Next, we present a recursion for $\overline X$ and complete the proof of
the main result. The developement of the recursion follows the steps
in \cite{BCMR} closely so we follow their notation and omit some of
the calculations in this short version.

\begin{figure}[t]
\center \includegraphics[width=7cm]{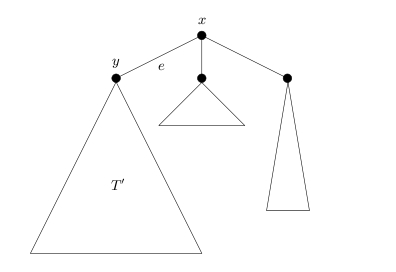}
\caption{A finite tree $T$}
\end{figure}

\subsubsection*{Magnetisation of a child} With $T$ and $x$ as defined
previously, let $y$ be a child of $x$ and let $T'$ be the subtree of $T$
rooted at $y$ (see Figure 1). Let $A'$ be the restriction of $A$ to
the leaves of $T'$. Let $Y=Y(A')$ denote the magnetization of $y$.

\begin{lemma} \label{lem:child-magnetization} We have
\begin{enumerate}[a)]
\item $\E^1_T[Y] = \theta \E^1_{T'}[Y]$ and $\E^0_T[Y] = \theta
  \E^0_{T'}[Y]$.
\item $\E^1_T[Y^2] = (1-\theta) \E_{T'}[Y^2]+ \theta \E_{T'}^1[Y^2].$
\item $\E^0_T[Y^2] = (1-\theta) \E_{T'}[Y^2]+ \theta \E_{T'}^0[Y^2].$
\end{enumerate}
\end{lemma}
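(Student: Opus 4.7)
The plan is to condition on $\sigma_y$ and invoke the Markov property of the Gibbs measure on the tree. Because $Y = Y(A')$ depends on the configuration on $T$ only through the leaves of $T'$, and because conditional on $\sigma_y$ the spins in $T'$ are independent of the spin $\sigma_x$ of the parent of $y$, for any real function $\phi$ I would begin from the decomposition
\[
\E^i_T[\phi(Y)] \;=\; \sum_{j \in \{0,1\}} p_{ij}\,\E^j_{T'}[\phi(Y)], \qquad i \in \{0,1\}.
\]

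The main remaining ingredient is a spectral decomposition of $M$. As $M$ is a reversible two-state stochastic matrix with stationary distribution $\pi$ and non-trivial eigenvalue $\theta$, a direct check from the explicit entries in Section~\ref{sec:hardcore_model} gives
\[
p_{ij} = (1-\theta)\,\pi_j + \theta\,\mathbbm{1}[i=j].
\]
Substituting this into the previous display collapses the sum into the clean one-step identity
\[
\E^i_T[\phi(Y)] = (1-\theta)\,\E_{T'}[\phi(Y)] + \theta\,\E^i_{T'}[\phi(Y)].
\]

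From here all three parts fall out. Taking $\phi(y)=y^2$ with $i=1$ and $i=0$ gives parts (b) and (c) respectively. For part (a) I take $\phi(y)=y$ and invoke Lemma~\ref{lem:basic-relations}(a) applied to the subtree $T'$, which yields $\E_{T'}[Y]=0$; the $(1-\theta)\E_{T'}[Y]$ term therefore vanishes, leaving $\E^i_T[Y] = \theta\,\E^i_{T'}[Y]$ for both $i\in\{0,1\}$ simultaneously.

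The lemma is mechanical rather than deep, so there is no real obstacle. The one point that warrants care is the opening step: one must verify that $Y$ is measurable with respect to the $\sigma$-algebra generated by the leaves of $T'$, so that conditioning on $\sigma_y$ genuinely decouples $Y$ from $\sigma_x$ via the tree Markov property. An alternative, less illuminating route would be to verify (a)--(c) directly by plugging in the explicit values $p_{11}=0,\ p_{10}=1,\ p_{01}=\omega/(1+\omega),\ p_{00}=1/(1+\omega)$, but the eigenvalue decomposition above makes transparent why $\theta$ is precisely the factor that appears in the recursion, which is exactly the form needed for the linearization argument to follow.
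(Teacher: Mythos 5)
Your proof is correct and matches the paper's approach: the paper's one-line proof sketch ("the Markov property when we condition on $x$" together with Lemma~\ref{lem:basic-relations}(a)) is precisely the decomposition $\E^i_T[\phi(Y)] = \sum_j p_{ij}\,\E^j_{T'}[\phi(Y)]$ that you begin from, and your spectral identity $p_{ij} = (1-\theta)\pi_j + \theta\,\mathbbm{1}[i=j]$ is the clean way to carry out the algebra the paper leaves implicit, with $\E_{T'}[Y]=0$ from Lemma~\ref{lem:basic-relations}(a) eliminating the $(1-\theta)$ term in part (a).
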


The proof follows from the first part of Lemma \ref{lem:basic-relations} and the
Markov property when we condition on $x$.

Next, we can write the effect on the magnetization of adding an edge
to the root and merging roots of two trees as follows.
Referring to Figure 2, let $T'$ (resp. $T''$) be a finite tree
rooted at $y$ (resp. $z$) with the channel on all edges being given
$M$, leaf states $A$ (resp $A''$) and weighted magnetisation at the
root $Y$ (resp. $Z$). Now
add an edge $(\hat y, z)$ to $T''$ to obtain a new tree $\hat T$. Then
merge $\hat T$ with $T'$ by
identifying $y = \hat y$ to obtain a new tree $T$. To avoid
ambiguities, denote by $x$ the root of $T$ and $X$ the
magnetization of the root of $T$. We
let $A = (A', A'')$ be the leaf state of $T$. Let $\hat Y$ be the
magnetization of the root of $\hat T$.\\

{\bf Note:} In the above construction, the vertex $y$ is a vertex ``at
the same level'' as $x$, and not a child of $x$ as it was in Lemma
\ref{lem:child-magnetization}. \\

\begin{figure}[t]
\center \includegraphics[width=9cm]{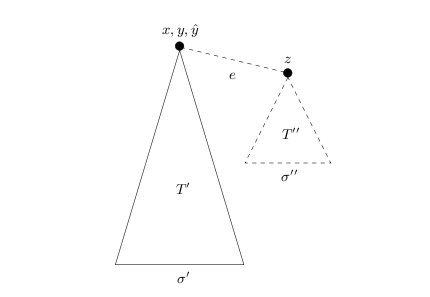}
\caption{The tree $T$ after obtained after merging $T'$ and $T''$. The
dashed subtree is $\hat T$.}
\end{figure}

\begin{lemma}\label{lem:add-edge}With the notation above, $\hat Y = \theta Z.$
\end{lemma}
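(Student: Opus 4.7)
The plan is to unfold the definition of the weighted magnetization and compute the posterior at $\hat y$ directly via one step of belief propagation from $z$.

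First I would write $\hat Y = \pi_{01}^{-1}\bigl[\p_{\hat T}[\sigma_{\hat y}=1 \mid A'']/\pi_1 - 1\bigr]$ and, correspondingly, observe that the definition of $Z$ gives the clean identity $\p_{T''}[\sigma_z=1 \mid A''] = \pi_1 + \pi_0 Z$ and hence $\p_{T''}[\sigma_z=0 \mid A''] = \pi_0 - \pi_0 Z$ (using $\pi_1 \pi_{01} = \pi_0$). So if I can compute $\p_{\hat T}[\sigma_{\hat y}=1 \mid A'']$ as an affine function of $Z$, I am essentially done.

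For this computation, condition on $\sigma_z$ and use the Markov property of the tree model (the leaves $A''$ lie entirely below $z$, so they are independent of $\sigma_{\hat y}$ given $\sigma_z$):
\begin{equation*}
\p_{\hat T}[\sigma_{\hat y}=1 \mid A''] = \sum_{s \in \{0,1\}} \p[\sigma_{\hat y}=1 \mid \sigma_z = s]\, \p_{T''}[\sigma_z = s \mid A''].
\end{equation*}
By reversibility of $M$ with respect to $\pi$, $\pi_1 p_{1s} = \pi_s p_{s1}$, so $\p[\sigma_{\hat y}=1 \mid \sigma_z = s] = p_{s1}$. Therefore
\begin{equation*}
\p_{\hat T}[\sigma_{\hat y}=1 \mid A''] = p_{11}(\pi_1 + \pi_0 Z) + p_{01}(\pi_0 - \pi_0 Z) = \pi_1 + (p_{11} - p_{01})\pi_0 Z,
\end{equation*}
where I used stationarity $\pi_1 p_{11} + \pi_0 p_{01} = \pi_1$ to collect the constant term. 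Since $p_{11} - p_{01} = \theta$, this is $\pi_1 + \theta \pi_0 Z$.

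Plugging back into the definition of $\hat Y$:
\begin{equation*}
\hat Y = \pi_{01}^{-1}\left[\frac{\pi_1 + \theta \pi_0 Z}{\pi_1} - 1\right] = \pi_{01}^{-1} \cdot \theta \pi_{01} Z = \theta Z,
\end{equation*}
which is the desired identity. The only real obstacle is keeping the Bayes/reversibility bookkeeping straight; once $\p[\sigma_{\hat y}=1 \mid \sigma_z=s] = p_{s1}$ is in hand, the rest is the linear algebra of the $2 \times 2$ transition matrix combined with the definition of $\theta$ as its second eigenvalue.
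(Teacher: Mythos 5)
Your proof is correct and follows the approach the paper gestures at (the paper states only that the lemma "follows by applying Bayes rule, the Markov property and Lemma 2.2" without writing out the computation). Your use of reversibility to get $\p[\sigma_{\hat y}=1\mid\sigma_z=s]=p_{s1}$, stationarity to collect the constant term, and the identity $\pi_1\pi_{01}=\pi_0$ are exactly the bookkeeping the paper leaves to the reader.
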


The proof follows by applying Bayes rule, the Markov property and
Lemma \ref{lem:basic-relations}. These facts also imply that

\begin{lemma}\label{lem:merge}
For any tree $\hat{T}$,
\[
X  =  \frac{Y +\hat Y + \Delta Y \hat Y }{1+\pi_{01}Y \hat Y}.
\]
\end{lemma}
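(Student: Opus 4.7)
The plan is to derive the identity by pure Bayesian manipulation, exploiting the fact that once $\hat T$ and $T'$ are merged at $x=y=\hat y$, the Markov property on the tree makes the leaf states $A'$ of $T'$ and $A''$ of $\hat T$ conditionally independent given $\sigma_x$.

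First I would pass to posterior odds. Applying Bayes' rule once to split the joint posterior into a product of likelihoods (using the conditional independence), and then a second time to convert each likelihood back into a posterior odds on the subtree, gives
\[
\frac{\p_T[\sigma_x=1\mid A',A'']}{\p_T[\sigma_x=0\mid A',A'']}
\;=\;
\pi_{01}\cdot\frac{\p_{T'}[\sigma_y=1\mid A']}{\p_{T'}[\sigma_y=0\mid A']}
\cdot\frac{\p_{\hat T}[\sigma_{\hat y}=1\mid A'']}{\p_{\hat T}[\sigma_{\hat y}=0\mid A'']}.
\]
The three odds-ratios can then be rewritten in terms of the magnetizations $X,Y,\hat Y$. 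From the definition (\ref{eq:mag2}), if $W$ is the magnetization of a root whose posterior weight on state $1$ is $p_1$, then $p_1=\pi_1+\pi_0 W$ and $p_0=\pi_0(1-W)$, so
\[
\frac{p_1}{p_0}\;=\;\frac{1+\pi_{01}W}{\pi_{01}(1-W)}.
\]

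Substituting this parametrization for each of $X$, $Y$, $\hat Y$ in the odds equation and cross-multiplying yields
\[
(1+\pi_{01}X)(1-Y)(1-\hat Y)\;=\;(1-X)(1+\pi_{01}Y)(1+\pi_{01}\hat Y).
\]
Expanding both sides, collecting the $X$-terms on the left, and pulling out the common factor $(1+\pi_{01})$ reduces this (using $\pi_{01}^2-1=(\pi_{01}-1)(\pi_{01}+1)=\Delta(1+\pi_{01})$) to
\[
X\,(1+\pi_{01}Y\hat Y)\;=\;Y+\hat Y+\Delta Y\hat Y,
\]
which is the claim after division by $1+\pi_{01}Y\hat Y$.

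The probabilistic content here is minimal: just Bayes' rule and the conditional independence of $A'$ and $A''$ given $\sigma_x$. The only real obstacle is the algebraic bookkeeping in passing from the odds identity to the fractional-linear form, and the key simplification to keep in mind is the factorization $\pi_{01}^2-1=\Delta(1+\pi_{01})$, which is precisely what produces the $\Delta Y\hat Y$ term in the numerator. Two sanity checks are available: the formula is manifestly symmetric in $Y$ and $\hat Y$, as it must be since the construction treats $T'$ and $\hat T$ symmetrically once they share the root, and setting $\hat Y=0$ (no information from $\hat T$) returns $X=Y$ as expected.
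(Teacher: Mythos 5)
Your proof is correct and follows exactly the approach the paper gestures at (Bayes' rule plus the Markov/conditional-independence structure); the paper itself gives no details for this lemma, stating only that it "follows by applying Bayes rule, the Markov property and Lemma \ref{lem:basic-relations}," so your explicit derivation via posterior odds, with the parametrization $p_1=\pi_1+\pi_0 W$ and the factorization $\pi_{01}^2-1=\Delta(1+\pi_{01})$, is a faithful and complete elaboration of what the authors had in mind.
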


With these lemmas in hand we can use the following relation to derive
a recursive upper bound on the second moments. We will use the
expansion

\[
\frac{1}{1+r} = 1-r + r^2 \frac{1}{1+r}.
\]

Taking $r = \pi_{01}Y \hat Y$, by Lemma \ref{lem:merge} we have
\begin{eqnarray}\label{eq:rec-expansion}
\nonumber X & = & (Y +\hat Y + \Delta Y \hat Y ) \left[ 1-\pi_{01}Y \hat Y +
  (\pi_{01}Y \hat Y )^2 \frac{1}{1+\pi_{01}Y \hat Y} \right]\\
\nonumber & = & Y +\hat Y  + \Delta Y \hat Y - \pi_{01}Y \hat Y \left( Y +\hat Y
  + \Delta Y \hat Y \right) + (\pi_{01})^2(Y \hat Y)^2 X\\
& \le & Y +\hat Y  + \Delta Y \hat Y - \pi_{01}Y \hat Y \left( Y +\hat Y
  + \Delta Y \hat Y \right) + (\pi_{01})^2(Y \hat Y)^2
\end{eqnarray}
where the last inequality follows since $X \leq 1$ with probability 1.

Let $\rho' = \overline Y_1 / \overline Y $ and $\rho'' =
\overline Z_1 / \overline Z$. Below, the moments $\overline Y$
etc. are defined according to the appropriate measures over the tree
rooted at $y$ (i.e. $T'$) etc.

By applying Lemmas \ref{lem:basic-relations}, \ref{lem:child-magnetization} and
\ref{lem:add-edge}, we have the following relations.

\begin{eqnarray}
\E^1_T[X] = \pi_{01} \overline X, \ \ \ \E^1_T[Y] = \pi_{01} \overline
y, \ \ \ \E^1_T[Y^2] = \overline Y \rho' \nonumber \\
\label{eq:conditional-second-moment} \E^1_T[\hat Y] = \pi_{01}\theta^2
\overline Z, \ \ \ \E^1_T[\hat Y^2]
= \theta^2 \overline Z ((1-\theta) + \theta \rho'')
\end{eqnarray}


Applying $(\pi_{01})^{-1}E_T^1[\cdot]$ to both sides of
(\ref{eq:rec-expansion}), we obtain the following.
\begin{eqnarray}
\nonumber \overline X & \leq & \overline Y + \theta^2 \overline Z +
\Delta\pi_{01}\overline Y \overline Z
-\pi_{01}\theta^2 \overline Y \overline Z \rho'
-\pi_{01}\theta^2 \overline Y \overline Z ((1-\theta) + \theta \rho'')\\
&& -\Delta\theta^2 \overline Y \overline Z \rho' ((1-\theta) + \theta \rho'')
+ \pi_{01}\theta^2 \overline Y \overline Z \rho' ((1-\theta) + \theta
\rho'')\nonumber \\
& = & \overline Y + \theta^2 \overline Z -\pi_{01} \theta^2 \overline Y \overline Z
[\mathcal A - \Delta \mathcal B] \nonumber
\end{eqnarray}
where
\begin{eqnarray}
\mathcal A & = & \rho' + (1-\rho')[(1-\theta) + \theta\rho''],
\nonumber \\
\mathrm{and} \ \
\mathcal B & = & 1- (\pi_{01})^{-1}\rho'[(1-\theta) + \theta\rho''] = 1-
\frac{\w}{1+\w} \rho'[(1-\theta) + \theta\rho''] . \nonumber
\end{eqnarray}

If $\mathcal A - \Delta \mathcal B \geq 0$, this would already give a
sufficiently good recursion to show that $\overline X(n)$ goes to $0$,
so we will assume is negative and try to get a good (negative) lower bound.
First note that by their definition $\rho',\rho'' \geq 0$. Further
since $\overline Y = \pi_1 \overline Y_1 + \pi_0 \overline Y_0$, \[
\rho' \leq (\pi_1)^{-1} = \frac{1+2\w}{\w}.\] Similarly, \[ \rho''
\leq (\pi_1)^{-1} = \frac{1+2\w}{\w}.\]

Since $\E_T^1[\hat Y^2]$ and $\overline Z \geq 0$, it follows from
(\ref{eq:conditional-second-moment}) that $(1-\theta) + \theta\rho'' \geq
0$. Together with the fact that $\rho' \geq 0$, this implies that
$\mathcal B \leq 1$.

Since $\mathcal A$ is multi-linear in $(\rho',\rho'')$, to minimize it,
its sufficient to consider the extreme cases. When $\rho' = 0$,
$\mathcal A$ is minimized at the upper bound of $\rho''$ and hence
\[
\mathcal A \geq 1-\pi_{01}\frac{\w}{1+\w} = 0.
\]

When $\rho' = (\pi_1)^{-1}$,
\[
\mathcal A = (\pi_1)^{-1} + (1-(\pi_1)^{-1})[1-\theta(1-\rho'')] \geq 0.
\]

Hence, we have
\[
\overline X \leq \overline Y + \theta^2 \overline Z + \frac{1}{1+\w}\overline Y \overline Z.
\]

Applying this recursively to the tree, we obtain the following
recursion for the moments.

\[
\overline X \leq (1+\w)\theta^2 \left[\left(1+\frac{\overline
  Z}{1+\w}\right)^k-1\right]
\]

We bound the $(1+x)^k-1$ term as,
\[
|(1+x)^k -1| \leq e^{|x|k}-1 =\int_0^{|x|k} e^s \ ds \leq e^{|x|k}k|x|
\]

and this implies the following recursion.

\begin{theorem}\label{thm:recursion}
If for some $n$, $\overline X(n) \leq \frac{\w}{2}$, we have that
\[
\overline X(n+1) \leq \w^2e^{\frac{1}{2}\w k}k \overline X(n).
\]
\end{theorem}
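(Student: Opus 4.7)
The plan is simply to combine the two displayed bounds that appear immediately before the statement with the hypothesis $\overline X(n) \le \omega/2$. The first ingredient is the recursion
\[
\overline X(n+1) \leq (1+\w) \theta^2 \left[\left(1 + \frac{\overline X(n)}{1+\w}\right)^k - 1\right],
\]
obtained by applying the one-step recursion $\overline X \leq \overline Y + \theta^2 \overline Z + \frac{1}{1+\w}\overline Y\overline Z$ iteratively as the $k$ children are attached to the root: each attached subtree is itself a $k$-ary tree of depth $n$, so $\overline Z = \overline X(n)$ throughout.

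The second ingredient is the elementary estimate $|(1+x)^k - 1| \leq e^{|x|k} k |x|$. I would apply it with $x = \overline X(n)/(1+\w)$. Under the hypothesis $\overline X(n) \le \w/2$, we have $|x| \leq \w/(2(1+\w)) \leq \w/2$, so $|x|k \leq \w k / 2$ and hence
\[
\left(1 + \frac{\overline X(n)}{1+\w}\right)^k - 1 \leq e^{\w k/2} \cdot k \cdot \frac{\overline X(n)}{1+\w}.
\]
Plugging this into the recursion, the $(1+\w)$ factors cancel and we obtain
\[
\overline X(n+1) \leq \theta^2 \, e^{\w k/2}\, k\, \overline X(n).
\]

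To finish, I would replace $\theta^2$ by $\w^2$ using $\theta = -\w/(1+\w)$, so $\theta^2 = \w^2/(1+\w)^2 \leq \w^2$. This yields exactly the claimed bound $\overline X(n+1) \leq \w^2 e^{\w k/2} k\, \overline X(n)$. There is no real obstacle here: essentially all the work has already been carried out in the preceding derivation of the $(1+\w)\theta^2[(1+\overline Z/(1+\w))^k - 1]$ recursion and the $|(1+x)^k - 1|$ estimate. The only point that deserves a brief verification is that every quantity being bounded above is non-negative, so that the substitutions of upper bounds preserve the direction of the inequality; this is immediate since $\overline X(n)$ is a second moment and $\w > 0$.
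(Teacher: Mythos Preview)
Your proposal is correct and follows exactly the approach implicit in the paper: the paper derives the recursion $\overline X \leq (1+\w)\theta^2[(1+\overline Z/(1+\w))^k - 1]$ and the estimate $|(1+x)^k-1|\le e^{|x|k}k|x|$, then simply asserts that ``this implies the following recursion''; you have correctly filled in those few lines, including the use of the hypothesis $\overline X(n)\le \w/2$ to bound the exponent and the final replacement $\theta^2 = \w^2/(1+\w)^2 \le \w^2$.
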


Thus if $ \w^2e^{\frac{1}{2}\w k}k < 1$ then it follows from the recursion that
\begin{equation}\label{e:recursionLimit}
\lim_n \overline X(n) = 0.
\end{equation}
When $\w = \frac1k\big[\ln k + \ln \ln k - \ln \ln \ln k
  -\beta\big]$ and $\beta> \ln 2 - \ln \ln 2$, by Lemma
\ref{lem:finite-levels}, for $k$ large enough, $\overline X(3) \leq
\frac{\w}{2}$. Hence by equation~(\ref{e:recursionLimit}) we have that
$\overline X(n) \to 0$ and so by Proposition~\ref{p:bar-x-non-reconst}
we have non-reconstruction. Since reconstruction is monotone in
$\lambda$ and hence in $\omega$ it follows that we have
non-reconstruction for $\omega \leq \ob$ for large $k$.  This
completes the proof of Theorem~\ref{t:main}.

\newpage

\end{document}